\newtheorem{theorem}{Theorem}[section]
\newtheorem{lemma}[theorem]{Lemma}
\newtheorem{corollary}[theorem]{Corollary}
\newtheorem{property}[theorem]{Property}
\theoremstyle{definition}
\newtheorem{ex}[theorem]{Example}
\theoremstyle{remark}
\newtheorem{remark}[theorem]{Remark}
\newcommand{\p} {\ensuremath {\mathbb{P}}}
\newcommand{\E} {\ensuremath {\mathbb{E}}}
\newcommand{\N} {\ensuremath {\mathbb{N}}}
\newcommand{\R} {\ensuremath {\mathbb{R}}}
\newcommand{\I} {\ensuremath {\mathbb{I}}}
\newcommand{\A} {\ensuremath {\mathscr{A}}}
\newcommand{\X} {\ensuremath {\mathscr{X}}}
\newcommand{\M} {\ensuremath {\mathscr{M}}}
\newcommand{\D} {\ensuremath {\mathscr{D}}}
\newcommand{\mo} {\ensuremath {\mathscr{P}}}
\newcommand{\Qi} {\ensuremath {\mathscr{Q}}}
\newcommand{\bnu} {\ensuremath {\bar{\nu}}}
\newcommand{\tnu} {\ensuremath {\tilde{\nu}}}
\newcommand{\ngamma} {\ensuremath {\gamma^{\nu}}}
\newcommand{\bgamma} {\ensuremath {\gamma^{\bnu_m}}}
\newcommand{\neta} {\ensuremath {\eta_{\nu}}}
\newcommand{\brho} {\ensuremath {\bar{\rho}}}
\begin{document}
\begin{frontmatter}

\title
{Asymptotic equivalence of jumps Lévy processes and their discrete counterpart}

\author[LJK]{Pierre \'Etor\'e}
\author[LJK]{Sana Louhichi}
\author[LJK,corr]{Ester Mariucci}
\address[LJK]{\it Laboratoire LJK, Universit\'e Joseph Fourier UMR 5224 51, Rue des Math\'ematiques, Saint Martin d'H\`eres BP 53 38041 Grenoble Cedex 09}
\address[corr]{Corresponding Author, Ester.Mariucci@imag.fr}
\date{}

\begin{abstract}
We establish the global asymptotic equivalence between a pure jumps Lévy process $\{X_t\}$ on the time interval $[0,T]$ with unknown Lévy measure $\nu$ belonging to a non-parametric
class and the observation of $2m^2$ Poisson independent random variables with parameters linked with the Lévy measure $\nu$. The equivalence result is asymptotic as $m$ 
tends to infinity.
The time $T$ is kept fixed and the sample path is continuously observed. This result justifies the idea that, from a statistical point of view, knowing how many jumps fall into a grid of intervals gives asymptotically the same amount of information as observing $\{X_t\}$.
\end{abstract}

\begin{keyword}
Non-parametric experiments\sep deficiency distance\sep asymptotic equivalence\sep Lévy processes.
\MSC 62B15\sep (62G20\sep 60G51).
\end{keyword}
\end{frontmatter}

\section{Introduction}

In recent years, the Le Cam theory on the asymptotic equivalence between statistical models has aroused great interest and a large number of works
has been published on this subject. Roughly speaking, asymptotic equivalence means that any statistical inference procedure can be transferred from one experiment to the
 other in such a way that the asymptotic risk remains the same, at least for bounded loss functions. 
One can use this property in order to obtain asymptotic results working in a simpler but equivalent setting. 
For the basic concepts and a detailed description of the notion of asymptotic equivalence, we refer to \cite{LeCam,LC2000}. 
A short review on this topic will be given in Section \ref{subsec:lecam}.

\vspace{0.2cm}
The main result of this paper states that the observation of a pure jumps Lévy process on the time interval $[0,T]$ with unknown Lévy measure $\nu$ 
belonging to a non-parametric class $\M$ is  asymptotically equivalent to observe certain independent Poisson random variables whose parameters
are related to the jump measure $\nu$. 
More precisely, in Theorem \ref{teo1} we prove that, under some conditions on the class of admissible Lévy measures $\nu$, the two following experiments are equivalent.
The first one consists on observing the $2m^2$ independent Poisson random variables $R_{\pm\infty} = \mo(T\nu(J_{\pm \infty}))$, $R_{j,k} = \mo(T\nu(J_{j,k}))$, where
\begin{equation}\label{eqn:J}
 J_{-\infty}=\Big]-\infty,-m\Big],\quad J_{j,k}=\Big]k+\frac{j-1}{m},k+\frac{j}{m}\Big], \quad J_{\infty}=\Big]m,\infty\Big[,
\end{equation}
for $j=1,\dots,m$, $k=-m,\dots,m-1$, $(j,k)\neq (1,0), (m,-1)$.  The second one is obtained by continuously observing a trajectory $\{x_t\}$ of a pure jumps Lévy process with Lévy measure $\nu$ and finite variation $\eta_{\nu}:=\int y \nu(dy)<\infty$, i.e. with characteristic function given by
\begin{equation*}
  \E\big[e^{iux_t}\big]=\exp\Big(t\int_{\R}\big(e^{iuy}-1\big)\nu(dy)\Big),\quad \forall u\in \R.
\end{equation*}
In fact, we can relax the hypothesis of finite variation as we show in Theorem \ref{teo1}.

We already know that, in a probabilistic sense, the full jump structure of a Lévy process $\{X_t\}$ is described by the random measure associated with its jumps, i.e.
$$
\mu^X(\omega, \cdot) = \sum_{t\geq 0} \delta_{(t, \Delta X_t(\omega))}.
$$
In this work, via the Le Cam theory, we formalize the idea that the amount of information contained in $\nu$ is, from a statistical point of view, asymptotically equivalent to knowing how many jumps fall into the $2m^2$ intervals of the form $\{J_{j,k}\}$ (see Corollary \ref{cor}).

Our proof is based on the construction, for any given Lévy measure $\nu$ in the parameter space $\M$, of an adequate ``discrete'' approximation $\bnu_m$ of $\nu$. In this sense, the scheme of the proof is similar to that in Brown and Low \cite{BL}, but with the significant technical difference that, instead of a Girsanov type change of measure, we make use of an Esscher type change of measure. This adds further complications, which we bypass by considering as a discrete counterpart $\bar{\nu}_m$ a measure with constant density on $J_{j,k}$ and such that $\bar\nu_m(J_{j,k}) = \nu(J_{j,k})$.

The paper is organized as follows. Sections \ref{subsec:lecam} to \ref{defexp} fix assumptions and notations while the main results are given in Section \ref{main results}.
Some examples can be found in Section \ref{sec:ex}.
The proofs are postponed to Section \ref{sec:proofs} and, in part, to the Appendix.
\section{Statement of the main results}\label{sec2}
\subsection{Some properties of the Le Cam $\Delta$-distance} \label{subsec:lecam}
The concept of asymptotic equivalence that we shall adopt in this paper is tightly related to the Le Cam $\Delta$-distance between statistical experiments. 
A \emph{statistical model} is a triplet $\mo_j=(\X_j,\A_j,\{P_{j,\theta}; \theta\in\Theta\})$ where $\{P_{j,\theta}; \theta\in\Theta\}$ 
is a family of probability distributions all defined on the same $\sigma$-field $\A_j$ over the \emph{sample space} $\X_j$ and $\Theta$ is the \emph{parameter space}.
The \emph{deficiency} $\delta(\mo_1,\mo_2)$ of $\mo_1$
with respect to $\mo_2$ quantifies ``how much information we lose'' by using $\mo_1$ instead of $\mo_2$ and is defined as
$\delta(\mo_1,\mo_2)=\inf_K\sup_{\theta\in \Theta}||KP_{1,\theta}-P_{2,\theta}||_{TV},$
 where TV stands for ``total variation'' and the infimum is taken over all ``transitions'' $K$ (see \cite{LeCam}, page 18).
 In our setting, however, the general notion of ``transitions'' can be replaced with the notion of Markov kernels. Indeed, when the model $\mo_1$ is dominated 
and the sample space $(\X_2,\A_2)$ of the experiment $\mo_2$
is a Polish space, the infimum appearing on the definition of the deficiency $\delta$ can be taken over all Markov kernels $K$ on $\X_1\times \A_2$ 
(see \cite{N96}, Proposition 10.2), i.e.
$$\delta(\mo_1,\mo_2)=\inf_K\sup_{\theta\in \Theta}\sup_{A\in \A_2}\bigg|\int_{\X_1}K(x,A)P_{1,\theta}(dx)-P_{2,\theta}(A)\bigg|. $$
Closely associated with the notion of deficiency is the so called $\Delta$-distance, i.e. the pseudo metric defined by:
$$\Delta(\mo_1,\mo_2):=\max(\delta(\mo_1,\mo_2),\delta(\mo_2,\mo_1)).$$
Two sequences of statistical models $(\mo_{1}^n)_{n\in\N}$ and $(\mo_{2}^n)_{n\in\N}$ are called \emph{asymptotically equivalent}
if $\Delta(\mo_{1}^n,\mo_{2}^n)$ tends to zero as $n$ goes to infinity. 
There are various techniques to bound 
the $\Delta$-distance. In our context we will use the following two well-known properties (see \cite{LeCam}):
\begin{property}\label{pro1}
 Let $\mo_i=(\X,\A,\{P_{i,\theta}, \theta\in\Theta\})$, $i=1,2$, be two dominated statistical models with the same sample space $\X$ and parameter space $\Theta$. 
Let $\xi$ be a common dominating measure and $g_{i,\theta}=\frac{dP_{i,\theta}}{d\xi}$. Define
$$L_1(\mo_1,\mo_2)=\sup_{\theta\in\Theta}\int_{\X}\vert g_{1,\theta}(x)-g_{2,\theta}(x)\vert\xi(dx).$$
Then, $\Delta(\mo_1,\mo_2)\leq L_1(\mo_1,\mo_2).$
\end{property}
\begin{property}\label{sta}
 Let $\mo_i=(\X_i,\A_i,\{P_{i,\theta}, \theta\in\Theta\})$, $i=1,2$, be two statistical models and let $(\X_1,\A_1)$ be a Polish space. 
Let $S:\X_1\to\X_2$ be a sufficient statistics
such that the distribution of $S$ under $P_{1,\theta}$ is equal to $P_{2,\theta}$. Then $\Delta(\mo_1,\mo_2)=0$. 
\end{property}
\subsection{The parameter space}\label{par} 
In order to state our results we need some regularity assumptions on the elements $\nu$ belonging to the parameter space $\M$. We will require that: 
\begin{enumerate}[(M1)]
\item There exists a Lévy measure $\tnu$ such that $\nu \ll\tnu$ for all $\nu$ in $\M$ (we will write  $\rho^{\nu}$ for the density $\frac{d\nu}{d\tnu}$ ).
 \item  $\int_{\R} (\sqrt{\rho^{\nu}(y)}-1)^2 \tnu(dy) <\infty,$ for all $\nu$ in $\M$. 
\end{enumerate}   
Moreover, following the same principle as in \cite{BL}, we introduce a discretization of the measure $\nu$. To that aim define
\begin{equation}\label{brho}
\bar{\rho}_m^{\nu}(y)=\frac{\nu(J_{\pm\infty})}{\tilde{\nu}(J_{\pm\infty})}\ \textnormal{ in } J_{\pm\infty},\quad \bar{\rho}_m^{\nu}(y)=\frac{\nu(J_{j,k})}{\tilde{\nu}(J_{j,k})}\  \textnormal{ in } J_{j,k} \textnormal{ if }\ J_{j,k}\nsubseteq \Big]-\frac{1}{m},\frac{1}{m}\Big]\quad \textnormal{ and }\quad \bar{\rho}_m^{\nu}(y)=1\textnormal{ in } \Big]-\frac{1}{m},\frac{1}{m}\Big],
\end{equation}
where, for $j=1,\dots,m$ and $k=-m,\dots,m-1$, $J_{j,k}$ are defined as in \eqref{eqn:J}. Define one more condition as:
\begin{enumerate}[(M3)]
\item $\lim\limits_{m \to \infty}\sup\limits_{\nu\in \M}\int_{\R}\vert \rho^{\nu}(y)-\brho_m^{\nu}(y)\vert \tnu(dy)=0$,
\end{enumerate}
where we have denoted by $\bnu_m$ the measure having $\brho_m^{\nu}$ in \eqref{brho} as a density with respect to $\tnu$. For brevity's sake, 
in the sequel we will omit the symbol $\nu$ simply writing $\rho=\rho^{\nu}$ or $\brho_n=\brho_n^{\nu}$, when this causes no confusion.

 In the following, we will denote by $\M_{\tnu}$ a class of Lévy measures which we will always assume to satisfy (M1)--(M3). We will use the notation $\M'_{\tnu}$ for a class
 of Lévy measures which also satisfy condition (M4):
 \begin{enumerate}[(M4)]
\item $\sup\limits_{\nu\in \M_{\tnu}}\int_{|y|\leq 1} |y|\nu(dy)<\infty$ and $\int_{|y|\leq 1} |y|\tnu(dy)<\infty$.
\end{enumerate}

\subsection{Definition of the experiments}\label{defexp}
In the following, let $D=D([0,T],\R)$ be the space of mappings $\omega$ from $[0,T]$ into $\R$ that are right-continuous with left limits. Define the \emph{canonical process} $x:D\to D$ by 
$\forall \omega\in D,\quad x_t(\omega)=\omega_t,\;\;\forall t\in [0,T].$
 Let $\D$ be the smallest $\sigma$-algebra of parts of $D$ that makes $x_s$, $s$ in $[0,T]$, measurable. Further, for any 
 $t\in [0,T]$, let $\D_t$ be the smallest $\sigma$-algebra
that makes $x_s$, $s$ in $[0,t]$, measurable.
Let $\{X_t\}$ be a Lévy process defined on $(\Omega,\A,\p)$ having characteristic triplet $(\gamma,0,\nu)$, i.e. 
$$\E_{\p}\big[e^{iuX_t}\big]=\exp\bigg(t iu\gamma+t\int_{\R}\big(e^{iuy}-1-(iuy\I_{|y|\leq 1})\big)\nu(dy)\bigg),\quad \forall u\in \R,\ \forall t\geq 0.$$ It is well known that it induces a probability measure 
$P^{(\gamma,0,\nu)}$ 
on (D,\D) such that $\{x_t\}$ defined on $\big(D,\D,P^{(\gamma,0,\nu)}\big)$ is a Lévy process identical in law with 
$(\{X_t\},\p)$.
In the sequel we will denote by $\big(\{x_t\},P^{(\gamma,0,\nu)}\big)$ such a Lévy process, stressing the probability measure.

In the case where $\int_{|y|\leq 1}|y|\nu(dy)<\infty$, we set $\eta_{\nu}:=\int y \nu(dy)$. Let us remark that $(\{x_t\},P^{(\eta_{\nu},0,\nu)})$ is a pure jumps Lévy process. Moreover, if $\nu$ is a finite Lévy measure, then the process $(\{x_t\},P^{(\eta_{\nu},0,\nu)})$ is a compound Poisson process.

Let us now introduce the class of experiments that we shall consider. 
Recall from the introduction that we have defined $2m^2$ independent observations of the form (denoting $\mo(\cdot)$ the Poisson distribution)
\begin{equation*}
        R_{-\infty}\sim \mo\bigg(T\nu\Big(\big]-\infty,-m\big]\Big)\bigg),\quad
      R_{j,k}\sim \mo\bigg(T\nu\Big(\big]k+\frac{j-1}{m},k+\frac{j}{m}\big]\Big)\bigg), \quad
      R_{\infty}\sim \mo\bigg(T\nu\Big(\big]m,\infty\big[\Big)\bigg).
\end{equation*}
Let $Q^{m,R}_{\nu}$ be the law of $R = (R_{-\infty},\dots, R_{j,k},\dots,R_{\infty})$. Then, the first pair of statistical experiments is described by 
\begin{align}\label{modello1}
 \mo^{(\ngamma,0,\nu)}&=\Big(D,\D,\big\{P^{(\ngamma,0,\nu)}; \nu\in \M_{\tnu}\big\}\Big)\\
\label{modelloR}
 \Qi_m^R&=\Big(\N^{2m^2},\mathcal{P}(\N^{2m^2}),\big\{Q^{m,R}_{\nu}; \nu\in \M_{\tnu}\big\}\Big),
\end{align}
where $\gamma^{\nu}:=\int_{\vert y \vert \leq 1}y(\nu(dy)-\tnu(dy)).$
Remark that this quantity is finite thanks to Assumption (M2) (See \cite{sato}, Remark 33.3).

Recall that we denote by $\M'_{\tnu}:=\{\nu \in \M_{\tnu}: \nu \textnormal{ satisfies (M4)}\}$. Then, the second pair of statistical models we shall consider is
\begin{align}\label{modello2}
 \mo^{'(\neta,0,\nu)}&=\Big(D,\D,\big\{P^{(\neta,0,\nu)}; \nu\in \M'_{\tnu}\big\}\Big)\\
\label{modelloR'}
 \Qi_m^{' R}&=\Big(\N^{2m^2},\mathcal{P}(\N^{2m^2}),\big\{Q^{m,R}_{\nu}; \nu\in \M'_{\tnu}\big\}\Big).
\end{align}
\subsection{Main result}\label{main results}
Notations will be kept as in Section \ref{defexp}.
\begin{theorem}\label{teo1}
 The experiments $\mo^{(\ngamma,0,\nu)}$ and $\Qi^R_m$ 
are asymptotically equivalent, that is
\begin{equation}\label{eaR}
\lim_{m\to\infty} \Delta\big(\mo^{(\ngamma,0,\nu)},\Qi^R_m\big)=0.
\end{equation}
We also have: 
\begin{equation}\label{eaR'}
\lim_{m\to\infty} \Delta\big(\mo^{'(\neta,0,\nu)},\Qi^{' R}_m\big)=0.
\end{equation}
\begin{remark}
 The asymptotic equivalence in \eqref{eaR'} involves the parameter space $\M'_{\tnu}$ which is smaller than that in \eqref{eaR}, but this allows us to treat the model 
 $\mo^{'(\neta,0,\nu)}$, that is, the case of pure jumps Lévy processes.
\end{remark}
\end{theorem}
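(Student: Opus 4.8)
The plan is to prove both \eqref{eaR} and \eqref{eaR'} by inserting, between the continuously observed process and the Poisson vector, an intermediate experiment in which one observes a Lévy process whose Lévy measure is the discretized measure $\bnu_m$ of \eqref{brho}. Writing $\mo^{(\bgamma,0,\bnu_m)}=(D,\D,\{P^{(\bgamma,0,\bnu_m)};\nu\in\M_{\tnu}\})$ for this experiment, the triangle inequality for the $\Delta$-distance reduces \eqref{eaR} to the two statements
\begin{equation*}
\Delta\big(\mo^{(\ngamma,0,\nu)},\mo^{(\bgamma,0,\bnu_m)}\big)\xrightarrow[m\to\infty]{}0
\qquad\text{and}\qquad
\Delta\big(\mo^{(\bgamma,0,\bnu_m)},\Qi^R_m\big)=0,
\end{equation*}
and \eqref{eaR'} to the analogous pair with $\neta$, $\eta_{\bnu_m}$ and $\M'_{\tnu}$ in place of $\ngamma$, $\bgamma$ and $\M_{\tnu}$.

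First I would treat the Poissonization step $\Delta(\mo^{(\bgamma,0,\bnu_m)},\Qi^R_m)=0$ via the sufficiency criterion of Property \ref{sta}, which applies since $D=D([0,T],\R)$ is Polish. The natural candidate is the statistic $S\colon D\to\N^{2m^2}$ sending a trajectory to the numbers of its jumps falling in the intervals $J_{\pm\infty}$ and $J_{j,k}$ of \eqref{eqn:J} other than the two (tiling $]-\tfrac1m,\tfrac1m]$) excluded indices. Since the jumps of $(\{x_t\},P^{(\bgamma,0,\bnu_m)})$ form a Poisson random measure with intensity $dt\otimes\bnu_m(dy)$ and, by construction, $\bnu_m(J_{j,k})=\nu(J_{j,k})$, $\bnu_m(J_{\pm\infty})=\nu(J_{\pm\infty})$, the law of $S$ is exactly $Q^{m,R}_\nu$. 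To check sufficiency I would verify that the conditional law of the path given $S$ is parameter-free: conditionally on the counts, the jump locations in each $J_{j,k}$ are uniform (because $\brho_m$ is constant there) and those in $J_{\pm\infty}$ follow $\tnu$ renormalized, hence $\nu$-free; the (uncounted) jumps inside $]-\tfrac1m,\tfrac1m]$ are governed by $\bnu_m=\tnu$, also $\nu$-free; and the choice $\bgamma=\int_{|y|\le1}y(\bnu_m-\tnu)(dy)$ is precisely what makes the deterministic drift cancel the $\nu$-dependent part of the small-jumps compensator, leaving the parameter-free drift $-t\int_{|y|\le1}y\,\tnu(dy)$. In the pure-jumps case the process equals $\sum_{s\le t}\Delta x_s$ and this last point is automatic, so the same argument yields $\Delta(\mo^{'(\eta_{\bnu_m},0,\bnu_m)},\Qi^{'R}_m)=0$, with (M4) ensuring finite variation.

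The approximation step is the heart of the argument. Since $\ngamma-\bgamma=\int_{|y|\le1}y(\nu-\bnu_m)(dy)$, the drift compatibility condition of the absolute-continuity theorem for Lévy processes (Sato \cite{sato}, Theorems 33.1--33.2) is met, so $P^{(\ngamma,0,\nu)}$ and $P^{(\bgamma,0,\bnu_m)}$ are comparable through an Esscher-type density whose Hellinger affinity over $[0,T]$ equals $\exp\!\big(-\tfrac T2\int_\R(\sqrt{\rho}-\sqrt{\brho_m})^2\,\tnu(dy)\big)$. Bounding total variation by the Hellinger distance and using $(\sqrt a-\sqrt b)^2\le|a-b|$, I would obtain
\begin{equation*}
\sup_{\nu\in\M_{\tnu}}\big\|P^{(\ngamma,0,\nu)}-P^{(\bgamma,0,\bnu_m)}\big\|_{TV}^2\le 2T\sup_{\nu\in\M_{\tnu}}\int_\R\big|\rho-\brho_m\big|\,\tnu(dy),
\end{equation*}
which tends to $0$ by (M3); Property \ref{pro1} then gives $\Delta(\mo^{(\ngamma,0,\nu)},\mo^{(\bgamma,0,\bnu_m)})\to0$. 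The same scheme applies to the pure-jumps models, the drifts $\neta,\eta_{\bnu_m}$ automatically meeting the compatibility condition.

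The main obstacle I anticipate is making this approximation step rigorous without assuming $\nu\sim\bnu_m$: one must justify the Hellinger-affinity formula and the Esscher density when $\rho$ and $\brho_m$ may vanish on different sets, and control the small-jump part near the origin in the infinite-variation regime of \eqref{eaR} (where (M4) is unavailable). Conditions (M1)--(M2) supply the square-integrability $\int(\sqrt\rho-1)^2\,\tnu(dy)<\infty$ needed for these manipulations, but assembling them into a uniform-in-$\nu$ bound that collapses exactly to the $L_1$-quantity of (M3) is the delicate point.
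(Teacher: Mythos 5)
Your proposal is correct and follows the paper's overall skeleton --- the same intermediate experiment $\mo^{(\bgamma,0,\bnu_m)}$ obtained by discretizing $\nu$ into $\bnu_m$, and the same statistic of jump counts for the Poissonization step --- but it implements both halves by genuinely different means. For the approximation step, where the paper proves the ad hoc Lemma \ref{lemmadisc}, bounding the $L_1$ distance by $2\sinh\big(T\int_{\R}|\rho-\brho_m|\,\tnu(dy)\big)$ via an Esscher-type splitting $R_T^m=e^{A^++A^-}$ and exact exponential-moment computations under $P^{(\ngamma,0,\nu)}$, you instead invoke the closed-form Hellinger affinity $\exp\big(-\tfrac{T}{2}\int_\R(\sqrt{\rho}-\sqrt{\brho_m})^2\,\tnu(dy)\big)$ and pass to total variation, which together with $(\sqrt a-\sqrt b)^2\le|a-b|$ gives an (M3)-driven bound of the same quality. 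Note that this affinity formula is not among the paper's tools: Sato's Theorems 33.1--33.2 (Theorem \ref{teosato}) only supply the likelihood, and under equivalence of the Lévy measures, so you must import the Hellinger formula from elsewhere (e.g.\ Mémin--Shiryaev, Liese, or Jacod--Shiryaev, Ch.~IV). What it buys is exactly the robustness you worry about at the end: Hellinger bounds require no equivalence between $\nu$ and $\bnu_m$, and the drift-compatibility condition is identical to Sato's, so the infinite-variation regime of \eqref{eaR} needs no extra care --- your anticipated obstacle is in fact not an obstacle on this route, whereas the paper's Lemma \ref{lemmadisc} must (and does) handle it through the exponential-moment identity. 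For the sufficiency step you check that the conditional law of the path given the counts is parameter-free, while the paper reads factorization directly off the explicit density $e^{U_T^{\brho_m}}$, which depends on $x$ only through $S(x)$; both are legitimate, the factorization being shorter. Finally, for \eqref{eaR'} you rerun both steps on the pure-jump models $\mo^{'(\neta,0,\nu)}$, $\mo^{'(\eta_{\bnu_m},0,\bnu_m)}$, whereas the paper proves the exact identity $\Delta\big(\mo^{(\ngamma,0,\nu)},\mo^{'(\neta,0,\nu)}\big)=0$ by the deterministic translation kernels $\pi_1,\pi_2$ (its Step 3) and deduces \eqref{eaR'} from \eqref{eaR}; the two are interchangeable since $\ngamma=\neta-\eta_{\tnu}$, and (M4) makes $\eta_{\bnu_m}$ finite as your route requires.

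Two small corrections to your Poissonization argument, neither of which affects the conclusion. First, conditionally on the counts, the jump locations in $J_{j,k}$ are not uniform: since $\brho_m$ is constant there, $\bnu_m$ restricted to $J_{j,k}$ is proportional to $\tnu$, not to Lebesgue measure, so the locations are i.i.d.\ with law $\tnu|_{J_{j,k}}/\tnu(J_{j,k})$ --- still $\nu$-free, which is all you need. Second, in the infinite-variation case the ``parameter-free drift $-t\int_{|y|\le1}y\,\tnu(dy)$'' need not exist as an absolutely convergent integral; the clean statement is that for $\varepsilon<1/m$ one has $\bgamma=\int_{\varepsilon<|y|\le1}y(\bnu_m-\tnu)(dy)$ because $\bnu_m=\tnu$ near the origin, so the cancellation takes place inside the compensated limit and the path equals the $\nu$-free functional $\lim_{\varepsilon\to0}\big(\sum_{s\le t}\Delta x_s\I_{|\Delta x_s|>\varepsilon}-t\int_{\varepsilon<|y|\le1}y\,\tnu(dy)\big)$ of its jump measure.
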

 Loosely speaking, the main interest in the Le Cam's asymptotic decision theory lies in the approximation of general statistical experiments by simpler ones. Adopting this point of view, we can reformulate Theorem \ref{teo1} as follows.
\begin{corollary}\label{cor}
So far as the study of $\nu$ is concerned, observing a Lévy process $\{X_t\}$ of 
characteristic triplet $(\ngamma,0,\nu)$ (or $(\neta,0,\nu)$) asymptotically gives the same amount of information as the \emph{a priori} coarser process 
$\big\{\sum_{t\leq T}\I_A(\Delta X_t)\big\}_{A\in \A^m}$, where $\mathscr{A}^m$ is the set defined by 
$\mathscr{A}^m=\Big\{J_{\pm \infty}, J_{j,k}; k=-m,\dots,m-1, j=1,\dots,m, (j,k)\neq (1,0),\ (m,-1)\Big\}$.
\end{corollary}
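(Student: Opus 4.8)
The statement is a reformulation of Theorem~\ref{teo1}, so the plan is to identify the experiment in which one observes the counting process $\big\{\sum_{t\leq T}\I_A(\Delta x_t)\big\}_{A\in\A^m}$ with the Poissonian experiment $\Qi^R_m$ (resp.\ $\Qi^{'R}_m$), and then invoke the asymptotic equivalence already established. First I would recall that the jumps of $(\{x_t\},P^{(\ngamma,0,\nu)})$ are described by a Poisson random measure on $[0,T]\times\R$ with intensity $dt\otimes\nu(dy)$. Each $A\in\A^m$ is bounded away from the origin---indeed the two innermost intervals $J_{1,0}=]0,\frac1m]$ and $J_{m,-1}=]-\frac1m,0]$ are exactly the ones excluded from $\A^m$---so that $\nu(A)<\infty$ and the count $R_A:=\sum_{t\leq T}\I_A(\Delta x_t)$ is almost surely finite. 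By the defining property of Poisson random measures, $R_A$ is then Poisson distributed with mean $T\nu(A)$; and since the sets in $\A^m$ are pairwise disjoint, the family $\{R_A\}_{A\in\A^m}$ consists of independent random variables.

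Next I would read this as a statement about statistical experiments. Consider the measurable map
\[
S:D\to\N^{2m^2},\qquad S(\omega)=\Big(\textstyle\sum_{t\leq T}\I_A(\Delta\omega_t)\Big)_{A\in\A^m}.
\]
The previous paragraph shows that the image of $P^{(\ngamma,0,\nu)}$ under $S$ is precisely $Q^{m,R}_\nu$, for every $\nu\in\M_{\tnu}$. Hence observing the coarse process is exactly observing $S$, and the corresponding experiment coincides with $\Qi^R_m$; in particular its $\Delta$-distance to $\Qi^R_m$ is zero. The same identification, now for $\nu\in\M'_{\tnu}$ and under $P^{(\neta,0,\nu)}$, matches it with $\Qi^{'R}_m$. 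Using that $\Delta$ is a pseudometric and combining with \eqref{eaR} (resp.\ \eqref{eaR'}) through the triangle inequality yields $\Delta\big(\mo^{(\ngamma,0,\nu)},\mathcal{E}_m\big)\to 0$, where $\mathcal{E}_m$ denotes the counting experiment, which is the content of the corollary.

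I do not expect a genuine obstacle here, since all the analytic work is carried by Theorem~\ref{teo1}. The only points requiring a little care are the measurability of $S$ on the Skorokhod space $D$ (routine, because counting jumps whose size lies in a fixed set bounded away from $0$ is a measurable functional of the trajectory) and the deliberate omission of the two intervals adjacent to the origin, which is what guarantees the finiteness of the Poisson parameters $T\nu(A)$ even when $\nu$ has infinite total mass near $0$.
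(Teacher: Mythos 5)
Your proposal is correct and follows essentially the same route as the paper's own proof: identify each count $\sum_{t\leq T}\I_A(\Delta x_t)$, $A\in\A^m$, as a Poisson variable of parameter $T\nu(A)$ under both $P^{(\ngamma,0,\nu)}$ and $P^{(\neta,0,\nu)}$, use disjointness of the sets in $\A^m$ for independence, and conclude via Theorem \ref{teo1}. Your added remarks on measurability of $S$ and on the exclusion of the two intervals adjacent to the origin (guaranteeing finite Poisson parameters) are sound elaborations of points the paper leaves implicit, not a different argument.
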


\begin{remark}
 Ideally, one would like to push the equivalence of Theorem \ref{teo1} one step further, to reduce to a Gaussian white noise experiment; in this way, one would have at hand a minimax estimator which could be carried over to the jumps model without loss of information. However, even though the $R_{j,k}$ variables are independent, they are not identically distributed, so that results such as \cite{N96} do not apply directly. A possible solution could be to use the proximity, in the Le Cam sense, between independent Poisson random variables non identically distributed and gaussian variables with fixed variance equal to 1. This will be the object of a future work. 
\end{remark}

\section{Examples}\label{sec:ex}
We now propose three different examples fitting in the hypotheses of Theorem \ref{teo1}. The first one is a case where the class of Lévy measures $\M'_{\tnu}$ consists of finite measures.
Specifically, it treats the class of compound Poisson processes with bounded intensities and uniformly Liptschtiz continuous densities. The last two examples concern infinite Lévy measures.
In the second one condition (M4) fails while the third one treats a class of tempered stable processes verifying (M4). Proofs can be found in \ref{ex}.

\begin{ex}\label{ex1}
Let $\tnu$ be a known finite Lévy measure and $L,K$ be fixed finite positive real numbers.
Consider
 \begin{equation*}
 \M_{\tnu}^{' L,K}=\Big\{\nu \textnormal{ Lévy measure }: \rho=\frac{d\nu}{d\tnu}  \textnormal{ exists, differentiable } \tnu \textnormal{-a.e.},
\textnormal{ $L$-Liptschtiz and }|\rho(0)|\leq K    \Big\}. 
\end{equation*}
Remark that the stated conditions on $\tnu$ and $\rho$ imply in particular that $\nu$ must be finite, i.e. $\nu(\R)<\infty$.
\end{ex}

\begin{ex}\label{ex2}
 Let $M$ and $\varepsilon$ be fixed positive real numbers. Consider the class of Lévy measures, defined by:
\begin{equation*}
 \M^{M,\varepsilon}=\Big\{\nu \textnormal{ Lévy measure }: \textnormal{ its density with respect to Lebesgue is } g(y)=e^{-\lambda y^2}y^{-2}
                         \textnormal{ where } \varepsilon\leq \lambda\leq M \Big\}.
\end{equation*}
\end{ex}
\begin{ex}\label{ex3}
Let $\alpha,\varepsilon, C_1, C_2$ be known positive real numbers such that $\alpha<1$ and let $M$ be a positive number. Consider the class 
of the tempered stable Lévy measures, that is
\begin{equation*}
 \M^{'\alpha,C_1,C_2,M,\varepsilon}=\Big\{\nu \textnormal{ Lévy measure }: \textnormal{ its density is }
\nonumber  g(y)=\frac{C_1}{|y|^{1+\alpha}}e^{-\lambda_1|y|}\I_{y<0}+\frac{C_2}{y^{1+\alpha}}e^{-\lambda_2y}\I_{y\geq0}
\textnormal{ where }  \varepsilon\leq \lambda_j\leq M,\ j=1,2\Big\}. 
\end{equation*}
 \end{ex}
 
\section{Proofs}\label{sec:proofs}
\subsection{Proof of Theorem \ref{teo1}}
One important ingredient of the proof is an Esscher type change of measure. 
Denote by $P|_{\D_t}$ the restriction of the probability $P$ to $\D_t$ and write $\nu\approx\tnu$ to indicate that the measure $\nu$ and $\tnu$ are equivalent.
The following result will be used in our proof.

\begin{theorem}[See \cite{sato}, Theorems 33.1--33.2]\label{teosato}
 Let $\big(\{x_t\},P^{(0,0,\tnu)}\big)$ and $\big(\{x_t\},P^{(\ngamma,0,\nu)}\big)$ be two Lévy processes on $\R$, where
\begin{equation}\label{gamma*}
 \ngamma:=\int_{\vert y \vert \leq 1}y(\nu-\tnu)(dy)
\end{equation}
is supposed to be finite. Then $P^{(\ngamma,0,\nu)}$ is locally equivalent to $P^{(0,0,\tnu)}$ if and only if $\nu\approx\tnu$ and the density $\frac{d\nu}{d\tnu}=\rho$ satisfies
\begin{equation}\label{Sato}
 \int(\sqrt{\rho(y)}-1)^2\tnu(dy)<\infty.
\end{equation}
When $P^{(\ngamma,0,\nu)}$ is locally equivalent to $P^{(0,0,\tnu)}$, the density is
$\frac{dP^{(\ngamma,0,\nu)}}{dP^{(0,0,\tnu)}}\Big|_{\D_t}(x)=\exp(U_t^{\rho}(x)),$
with
\begin{equation}\label{U}
 U_t^{\rho}(x)=\lim_{\varepsilon\to 0} \Big(\sum_{r\leq t}\ln \rho(\Delta x_r)\I_{\vert\Delta x_r\vert>\varepsilon}-
\int_{\vert y\vert > \varepsilon} t(\rho(y)-1)\tnu(dy)\Big),\quad P^{(0,0,\tnu)}\textnormal{-a.s.}
\end{equation}
The convergence in \eqref{U} is uniform in $t$ on any bounded interval, $P^{(0,0,\tnu)}$-a.s.
Besides, $U^{\rho}(x)$ defined by \eqref{U} is a Lévy process satisfying $\E_{P^{(0,0,\tnu)}}[e^{U_t^{\rho}(x)}]=1$, $\forall t\in [0,T]$.
\end{theorem}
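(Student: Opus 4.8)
The plan is to establish the three assertions in turn: the characterization of local equivalence (the ``if and only if''), the explicit exponential form of the Radon--Nikodym density on $\D_t$, and the normalization $\E_{P^{(0,0,\tnu)}}[e^{U_t^{\rho}}]=1$. The backbone of the argument is the L\'evy--It\^o decomposition, which splits the jumps according to their size and reduces path-space absolute continuity to a product (Kakutani-type) problem over disjoint jump-size cells; the same decomposition is what lets me build the density by truncation and then pass to the limit.

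First I would prove sufficiency by constructing the candidate density through truncation. For $\varepsilon>0$ set
\[
U_t^{\rho,\varepsilon}(x)=\sum_{r\leq t}\ln\rho(\Delta x_r)\I_{|\Delta x_r|>\varepsilon}-t\int_{|y|>\varepsilon}(\rho(y)-1)\tnu(dy).
\]
Since $\tnu(\{|y|>\varepsilon\})<\infty$, the first term is a finite sum of a compound-Poisson type and the integral is finite, so $U_t^{\rho,\varepsilon}$ is a well-defined L\'evy process under $P^{(0,0,\tnu)}$. By the exponential formula for Poisson integrals (L\'evy--Khintchine), $\exp(U_t^{\rho,\varepsilon})$ is a positive martingale with $\E_{P^{(0,0,\tnu)}}[\exp(U_t^{\rho,\varepsilon})]=1$, and computing the characteristic function of the tilted measure $\exp(U_t^{\rho,\varepsilon})\,dP^{(0,0,\tnu)}$ shows it is the law of the L\'evy process whose jump intensity on $\{|y|>\varepsilon\}$ is $\nu$ (the factor $\rho$ turning $\tnu$ into $\nu$) and whose drift is shifted by exactly $\int_{\varepsilon<|y|\leq 1}y(\nu-\tnu)(dy)$; this is where the definition \eqref{gamma*} of $\ngamma$ enters and makes the drift match in the limit. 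Under \eqref{Sato} the family $\{\exp(\tfrac12 U_t^{\rho,\varepsilon})\}$ is then shown to be Cauchy in $L^2(P^{(0,0,\tnu)})$, the relevant increment estimate reducing to the tail integral $\int_{\varepsilon'<|y|\leq\varepsilon}(\sqrt{\rho(y)}-1)^2\tnu(dy)\to 0$. Hence $U_t^{\rho,\varepsilon}$ converges $P^{(0,0,\tnu)}$-a.s.\ to the limit $U_t^{\rho}$ of \eqref{U}, with the convergence made uniform in $t$ on bounded intervals via Doob's maximal inequality; passing to the limit gives $\E_{P^{(0,0,\tnu)}}[e^{U_t^{\rho}}]=1$ and identifies the triplet of $\exp(U_t^{\rho})\,dP^{(0,0,\tnu)}$ as $(\ngamma,0,\nu)$, establishing both the density formula and local equivalence.

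For necessity I would argue by contraposition using the L\'evy dichotomy. If $\nu\not\approx\tnu$, one isolates a jump-size Borel set carrying mass under one measure but not the other, so with positive probability the two processes exhibit incompatible jump configurations on $[0,t]$, forcing mutual singularity. If $\nu\approx\tnu$ but $\int(\sqrt{\rho}-1)^2\tnu=\infty$, the Hellinger affinity of order $1/2$ between the two path measures on $\D_t$ factors over disjoint jump-size cells into a product whose value is governed by $\exp\!\big(-\tfrac12\, t\int(\sqrt{\rho}-1)^2\tnu\big)$; divergence of the exponent sends the affinity to $0$, which by Kakutani's theorem yields singularity. Thus local equivalence forces both $\nu\approx\tnu$ and \eqref{Sato}.

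The main obstacle I anticipate is the limit step: showing that $\exp(U_t^{\rho,\varepsilon})$ converges to a genuine density with no loss of mass, equivalently that $\E_{P^{(0,0,\tnu)}}[e^{U_t^{\rho}}]$ remains equal to $1$ rather than dropping below it. This is precisely the point at which condition \eqref{Sato} is used sharply, since it controls the $L^2$-convergence of the accumulating small-jump part and rules out escape of mass to the singular component. Interchanging $\lim_{\varepsilon\to 0}$ with the expectation, uniformly in $t$, is the delicate heart of the argument.
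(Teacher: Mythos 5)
The paper does not actually prove this statement: Theorem~\ref{teosato} is quoted directly from Sato's monograph (Theorems 33.1--33.2), so there is no internal proof to compare against, and the right benchmark is the cited source. Your outline reconstructs, correctly in its essentials, exactly the classical argument behind those theorems: truncation of the small jumps, the exponential (Esscher-type) martingale $e^{U^{\rho,\varepsilon}_t}$ with unit expectation at each level $\varepsilon$, $L^2$-convergence of $e^{U^{\rho,\varepsilon}_t/2}$ controlled precisely by the tail of \eqref{Sato} (your increment computation gives affinity $\exp\big(-\tfrac{t}{2}\int_{\varepsilon'<|y|\leq\varepsilon}(\sqrt{\rho}-1)^2\tnu(dy)\big)$, which is the correct quantity), uniform integrability ruling out loss of mass so that $\E_{P^{(0,0,\tnu)}}[e^{U_t^{\rho}}]=1$, and the Hellinger--Kakutani factorization over disjoint jump-size cells for necessity. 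Two points are stated loosely but are repairable along standard lines: uniform-in-$t$ convergence should come from Doob's inequality applied to the compensated L\'evy martingales $U^{\rho,\varepsilon}-U^{\rho,\varepsilon'}$ after splitting $\ln\rho$ into its bounded part and its large part (finitely many jumps, since \eqref{Sato} forces $\tnu(|\ln\rho|>1)<\infty$), not to the exponentials themselves, which need not lie in $L^2$; and in the necessity step you should note explicitly that the cell components of the L\'evy--It\^o decomposition are measurable functionals of the path under both laws (the jump measure is recoverable from the trajectory), so that vanishing of the product affinity indeed yields mutual singularity on $\D_t$, the drift condition being automatically compatible here because $\ngamma$ is pinned to $\int_{|y|\leq 1}y(\nu-\tnu)(dy)$.
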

Remark that the finiteness in \eqref{Sato} implies that in \eqref{gamma*}  (see \cite{sato}, Remark 33.3). 
The proof of Theorem \ref{teo1} is divided in three steps.

\textbf{STEP 1.} The task is to prove that: $\lim\limits_{m\to \infty}\Delta\big(\mo^{(\ngamma,0,\nu)},\mo^{(\bgamma,0,\bnu_m)}\big)=0$.\\
Recall that $\bnu_m$ is the Lévy measure defined in Section \ref{par} and observe that $\gamma^{\bnu_m}$ is finite thanks to Hypothesis \eqref{Sato} and the definition of $\brho_m$. 
We know, by Theorem \ref{teosato}, that the $L_1$ distance between $P^{(\ngamma,0,\nu)}$ and $P^{(\bgamma,0,\bnu_m)}$ is given by
\begin{equation*}
L_1\Big(P^{(\ngamma,0,\nu)},P^{(\bgamma,0,\bnu_m)}\Big)=\E_{P^{(0,0,\tnu)}}\bigg[\Big| \frac{dP^{(\ngamma,0,\nu)}}{dP^{(0,0,\tnu)}}(x)
-\frac{dP^{(\bgamma,0,\bnu_m)}}{dP^{(0,0,\tnu)}}(x)\Big|\bigg]
=\E_{P^{(\ngamma,0,\nu)}}\Big[\big| 1-\exp\big(U_T^{\brho_m}(x)-U_T^{\rho}(x)\big)\big|\Big], 
\end{equation*}
with $U_T^{\rho}(x)$ defined as in \eqref{U}. Introduce the quantity
$R_T^m(x):=\exp\big(U_T^{\brho_m}(x)-U_T^{\rho}(x)\big)$
and observe that, by definition, $
R_T^m(x)=\exp\bigg(\lim_{\varepsilon\to 0}\Big(\sum_{r\leq T}\ln \frac{d\bnu_m}{d\nu}(\Delta x_r)\I_{|\Delta x_r|> \varepsilon}-
T\int_{|y|>\varepsilon} \Big(\frac{d\bnu_m}{d\nu}(y)-1\Big)\nu(dy)\Big)\bigg),
$ $P^{(\ngamma,0,\nu)}$-a.s. By Lemma \ref{lemmadisc}, we get: $L_1\Big(P^{(\ngamma,0,\nu)},P^{(\bgamma,0,\bnu_m)}\Big)\leq 2\sinh\Big(T\int_{\R}\vert\rho(y)-\brho_m(y) \vert \tnu(dy)\Big).$
Thus, thanks to Assumption (M3), we have 
$\lim_{m\to\infty}\sup_{\nu\in\M_{\tnu}}L_1\Big(P^{(\ngamma,0,\nu)},P^{(\bgamma,0,\bnu_m)}\Big)=0.$
By Property \ref{pro1}, we conclude that the models $\mo^{(\ngamma,0,\nu)}$ and $\mo^{(\bgamma,0,\bnu_m)}$ are asymptotically equivalent as $m$ goes to infinity. 

\vspace{0.3cm}

\textbf{STEP 2.} The goal is to prove that: $\Delta\big(\mo^{(\bgamma,0,\bnu_m)},\Qi_m^R\big)=0$  for all $m$.\\
Consider the statistics $S:(D,\D)\to \big(\bar\N^{2m^2+2},\mathcal{P}(\bar\N^{2m^2+2})\big)$  defined by 
\begin{equation*}
S(x)=\bigg(N_T^{x;-\infty},N_T^{x;\,1,-m},\dots,N_T^{x;\,m,m-1},N_T^{x;\infty}\bigg)\quad \textnormal{with} \quad
 N_T^{x;\pm\infty}=\sum_{r\leq T}\I_{J_{\pm\infty}}(\Delta x_r),\quad N_T^{x;\,j,k}=\sum_{r\leq T}\I_{J_{j,k}}(\Delta x_r).
\end{equation*}
Applying Theorem \ref{teosato} to $\big(\{x_t\}, P^{(\bgamma,0,\bnu_m)}\big)$ and $(\{x_t\},P^{(0,0,\tnu)})$, we obtain that 
\begin{equation*}
\frac{d P^{(\bgamma,0,\bnu_m)}}{dP^{(0,0,\tnu)}}(x)=e^{U^{\brho_m}_T(x)}=\exp\Bigg(\ln\frac{\nu(J_{-\infty})}{\tilde{\nu}(J_{-\infty})} N_T^{x;-\infty}
+\ln\frac{\nu(J_{\infty})}{\tilde{\nu}(J_{\infty})} N_T^{x;\infty}
 +\sum_{\substack {j=1,\dots,m\\k=-m,\dots,m-1\\(j,k)\neq (1,0),(m,-1)}} \ln\frac{\nu(J_{j,k})}
{\tilde{\nu}(J_{j,k})} N_T^{x;\, j,k}
  +T\int_{\R} (\brho_m(y)-1)\tnu(dy)\Bigg).
\end{equation*}
Hence, by means of the Fisher factorization theorem, we conclude that $S$ is a sufficient statistics for $\{P^{(\bgamma,0,\bnu_m)}; \nu\in \M_{\tnu}\}$. 
Furthermore, under $P^{(\bgamma,0,\bnu_m)}$, the random variables $N_T^{x;j,k}$ (resp. $N_T^{x;-\infty}$ and $N_T^{x;\infty}$) have Poisson distributions 
with parameters $T\bar{\nu}_m\big(J_{j,k}\big)$ 
(resp. $T\bar{\nu}_m\big(J_{-\infty}\big)$ and $T\bar{\nu}_m\big(J_{\infty}\big)$), which, by the definition of $\bar{\nu}_m$, is
equal to $T\nu\big(J_{j,k}\big)$ (resp. $T\nu\big(J_{-\infty}\big)$, $T\nu\big(J_{\infty}\big)$). Moreover, since the J's intervals are disjoint, 
the random variables $N_T^{x,\cdot}$ are independent. 
It follows that the law of $S$ under $P^{(\bgamma,0,\bnu_m)}$ is 
$Q_{\nu}^{m,R}$. Then, by means of Property \ref{sta}, we get 
$\Delta(\mo^{(\bgamma,0,\bnu_m)}, \Qi_m^R)=0, \textnormal{ for all } m.$

\textbf{STEP 3.} The purpose is to prove that: if $\nu$ belongs to $\M'_{\tnu}$ then,
$\Delta(\mo^{(\ngamma,0,\nu)},\mo^{'(\neta,0,\nu)})=0.$
To that aim, consider the Markov kernels $\pi_1$, $\pi_2$ defined as follows
\begin{equation*}
\pi_1(x,A)=\I_{A}(x^d), \quad 
 \pi_2(x,A)=\I_{A}(x-\cdot \eta_{\tnu}),
\quad \forall x\in D, A \in \D,
\end{equation*}
where we have denoted by $x^d$ the discontinuous part of the trajectory $x$, i.e.
$\Delta x_r = x_r - \lim_{s \uparrow r} x_s,\ x_t^d=\sum_{r \leq t}\Delta x_r$ and by $x-\cdot \eta_{\tnu}$ the trajectory $x_t-t\eta_{\tnu}$, $t\in[0,T]$.
On the one hand we have:
\begin{equation*}
 \pi_1 P^{(\ngamma,0,\nu)}(A)=\int_D \pi_1(x,A)P^{(\ngamma,0,\nu)}(dx)=\int_D \I_A(x^d)P^{(\ngamma,0,\nu)}(dx)=P^{(\neta,0,\nu)}(A),
\end{equation*}
where in the last equality we have used the fact that, under $P^{(\ngamma,0,\nu)}$, $\{x_t^d\}$ is a Lévy process with characteristic triplet $(\neta,0,\nu)$ 
(see \cite{sato}, Theorem 19.3).
On the other hand:
\begin{equation*}
 \pi_2 P^{(\neta,0,\nu)}(A)=\int_D \pi_2(x,A)P^{(\neta,0,\nu)}(dx)=\int_D \I_A(x-\cdot\eta_{\tnu})P^{(\neta,0,\nu)}(dx)=P^{(\ngamma,0,\nu)}(A),
\end{equation*}
since, by definition, $\ngamma$ is equal to $\neta-\eta_{\tnu}$. The conclusion follows by the definition of the $\Delta$-distance.
\qed

\begin{proof}[Proof of Corollary \ref{cor}]
It is enough to note that, for all $A$ in $\A^m$, the random variable $\sum_{t\leq T}\I_{A}(\Delta x_t)$ has a Poisson distribution of parameter $T\nu(A)$ 
under both $P^{(\ngamma,0,\nu)}$ and $P^{(\neta,0,\nu)}$. The independence follows since the elements of $\A^m$ are disjoint.  
\end{proof}
\appendix
\section{Appendix}
\subsection{A technical lemma}
\begin{lemma}\label{lemmadisc}
The following limit
\begin{equation}\label{RT}
R_T^m(x):=\lim_{\varepsilon\to0}\bigg(\exp\Big(\sum_{r\leq T}\ln \frac{\bar{\rho}_m}{\rho}(\Delta x_r)\I_{\vert \Delta x_r\vert > \varepsilon}-
T\int_{\vert y \vert >\varepsilon}(\bnu_m-\nu)(dy)\Big)\bigg),
\end{equation}
with $\rho$, $\brho_m$, $\ngamma$ and $\bnu_m$ as in Section \ref{sec2}, exists uniformly in $t$ in any bounded interval, $P^{(\ngamma,0,\nu)}$-a.s. and
 \begin{equation}\label{eq:ipd}
  \E_{P^{(\ngamma,0,\nu)}}\Big[\big| 1-R_T^m(x)\big|\Big]\leq 2\sinh\bigg(T\int_{\R}\vert \rho(y)-\brho_m(y)\vert \tnu(dy)\bigg).
 \end{equation}
\end{lemma}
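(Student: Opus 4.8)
The statement has two parts: the $P^{(\ngamma,0,\nu)}$-a.s. existence of the limit \eqref{RT} (uniformly in $t$ on bounded intervals), and the estimate \eqref{eq:ipd}. The plan is to obtain the existence by a direct double application of Theorem \ref{teosato}, and to reduce the estimate to an elementary convexity inequality after factoring $R_T^m$ into a part bounded below and a part bounded above.

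For the existence, I would apply Theorem \ref{teosato} to the two pairs $\big(\{x_t\},P^{(\ngamma,0,\nu)}\big),\big(\{x_t\},P^{(0,0,\tnu)}\big)$ and $\big(\{x_t\},P^{(\bgamma,0,\bnu_m)}\big),\big(\{x_t\},P^{(0,0,\tnu)}\big)$. Condition (M2) gives \eqref{Sato} for $\rho$; for $\brho_m$ one checks \eqref{Sato} via the coarsening inequality $\big(\sqrt{\nu(J)}-\sqrt{\tnu(J)}\big)^2\le\int_J(\sqrt{\rho}-1)^2\,d\tnu$ (Cauchy--Schwarz applied to $\int_J\sqrt{\rho}\,d\tnu$), summed over the intervals $J$, which yields $\int(\sqrt{\brho_m}-1)^2\,d\tnu\le\int(\sqrt{\rho}-1)^2\,d\tnu<\infty$. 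Theorem \ref{teosato} then produces $U_t^{\rho}$ and $U_t^{\brho_m}$ as limits that are uniform in $t$ on bounded intervals, $P^{(0,0,\tnu)}$-a.s.; subtracting them, the difference $U_t^{\brho_m}-U_t^{\rho}$ is the uniform $P^{(0,0,\tnu)}$-a.s. limit of exactly the exponent in \eqref{RT}, and by local equivalence of $P^{(\ngamma,0,\nu)}$ and $P^{(0,0,\tnu)}$ the same holds $P^{(\ngamma,0,\nu)}$-a.s. This identifies $R_T^m=\exp\big(U_T^{\brho_m}-U_T^{\rho}\big)$ as $\frac{dP^{(\bgamma,0,\bnu_m)}}{dP^{(\ngamma,0,\nu)}}\big|_{\D_T}$, so in particular $\E_{P^{(\ngamma,0,\nu)}}[R_T^m]=1$.

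For the estimate, set $f=\brho_m/\rho$ and work under $P^{(\ngamma,0,\nu)}$, where the jumps of $\{x_t\}$ on $[0,T]$ form a Poisson random measure with intensity $T\nu$. Splitting $\ln f$ into positive and negative parts (supported on the disjoint sets $\{\brho_m>\rho\}$ and $\{\brho_m<\rho\}$) and distributing the compensator $T\int(f-1)\,d\nu=c_+-c_-$, the exponential formula gives $R_T^m=U\cdot V$ with $U=\exp\big(\sum_{r\le T}(\ln f)^+(\Delta x_r)-c_+\big)$ and $V=\exp\big(c_--\sum_{r\le T}(\ln f)^-(\Delta x_r)\big)$, where $c_+=T\int_{\{\brho_m>\rho\}}(\brho_m-\rho)\,d\tnu$ and $c_-=T\int_{\{\brho_m<\rho\}}(\rho-\brho_m)\,d\tnu$, so that $c_++c_-=T\int_{\R}|\rho-\brho_m|\,d\tnu=:d$. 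By construction $U\ge e^{-c_+}$ and $0\le V\le e^{c_-}$, and the exponential formula gives $\E_{P^{(\ngamma,0,\nu)}}[V]=e^{c_-}\E\big[e^{-\sum_{r\le T}(\ln f)^-(\Delta x_r)}\big]=1$. From $\E_{P^{(\ngamma,0,\nu)}}[R_T^m]=1$ one has $\E|1-R_T^m|=2\,\E[(1-R_T^m)^+]$; using $U\ge e^{-c_+}$ pointwise gives $(1-UV)^+\le(1-e^{-c_+}V)^+$, and since $t\mapsto(1-e^{-c_+}t)^+$ is convex it lies below its chord on $[0,e^{c_-}]$, so evaluating the chord at $\E[V]=1$ yields $\E[(1-e^{-c_+}V)^+]\le 1-e^{-\max(c_+,c_-)}\le\sinh\!\big(\max(c_+,c_-)\big)\le\sinh(d)$. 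Multiplying by $2$ gives \eqref{eq:ipd}.

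\textbf{Main obstacle.} The delicate part is the estimate, not the existence. A crude bound such as $|1-e^u|\le e^{|u|}-1$ would force the appearance of $\E\big[e^{\sum_{r\le T}(\ln f)^-(\Delta x_r)}\big]$, which can be infinite in the infinite-activity regime; the purpose of the factorization $R_T^m=UV$ is precisely that $U$ is bounded below and $V$ is bounded above, so that the argument only ever uses the finite quantities $c_+,c_-$ together with the chord of a convex function on a bounded interval. A secondary technical point, required to legitimately invoke Theorem \ref{teosato} for the discretized measure, is the verification that $\brho_m$ still satisfies the Hellinger-type condition \eqref{Sato}, which is where the coarsening inequality enters.
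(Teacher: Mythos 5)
Your proposal is correct, and while your existence argument runs along essentially the same lines as the paper's (an appeal to Theorem \ref{teosato}), your proof of the key estimate \eqref{eq:ipd} takes a genuinely different route. The paper also splits according to the sign of $\brho_m-\rho$, but via a \emph{crossed additive} decomposition: it sets $f^+=\big(\brho_m/\rho\big)^{\I_{\brho_m\geq\rho}}$, $f^-=\big(\brho_m/\rho\big)^{\I_{\rho>\brho_m}}$ and defines $A^{\pm}$ as the compensated sum of $\ln f^{\pm}$ paired with the compensator of the \emph{opposite} sign class, so that $A^+\geq 0\geq A^-$ and $R_T^m=e^{A^++A^-}$; the elementary inequality $\vert 1-e^{x+y}\vert\leq \frac{1+e^x}{2}\vert 1-e^y\vert+\frac{1+e^y}{2}\vert 1-e^x\vert$ then collapses to $\E_{P^{(\ngamma,0,\nu)}}\vert 1-R_T^m\vert\leq \E[e^{A^+}-e^{A^-}]$, which is evaluated \emph{exactly} using the martingale property $\E[e^{U_T^{f^{\pm}}}]=1$ from Theorem \ref{teosato}, giving $2\sinh\big(T\int\vert\rho-\brho_m\vert\,d\tnu\big)$ on the nose. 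You instead pair each sign class with its own compensator ($R_T^m=UV$), use $\E[R_T^m]=1$ to reduce to $2\,\E[(1-R_T^m)^+]$, and finish with the pointwise bound $U\geq e^{-c_+}$ plus the chord of the convex map $t\mapsto(1-e^{-c_+}t)^+$ on $[0,e^{c_-}]$ evaluated at $\E[V]=1$; I checked the two cases $c_+\gtrless c_-$ and your bound $1-e^{-\max(c_+,c_-)}$ is right, and since $1-e^{-a}\leq\sinh(a)\leq\sinh(c_++c_-)$ it recovers \eqref{eq:ipd}. Both arguments hinge on the structural point you correctly flag as the main obstacle (each factor must be controlled on one side only, never exponentiating $\sum(\ln f)^-$ with a plus sign); what yours buys is the strictly sharper intermediate bound $2\big(1-e^{-\max(c_+,c_-)}\big)$, which never exceeds the trivial total-variation bound $2$, whereas $2\sinh(d)$ blows up; what the paper's buys is an identity-level computation with no convexity step. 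Your verification of \eqref{Sato} for $\brho_m$ (monotonicity of the Hellinger integral under coarsening, via Cauchy--Schwarz) also differs from the paper's, which instead checks the Hellinger condition for $\frac{d\bnu_m}{d\nu}$ relative to $\nu$ using $(\sqrt a-\sqrt b)^2\leq 2a+2b$ and applies Sato's theorem once with base measure $\nu$; both are valid. Two small points you should make explicit to seal the factorization $R_T^m=UV$: first, when $T\int\vert\rho-\brho_m\vert\,d\tnu=\infty$ the claim is vacuous (as $\E\vert 1-R_T^m\vert\leq 2$), so one may assume $c_{\pm}<\infty$; second, under this assumption the separate sums $\sum_{r\leq T}(\ln f)^{\pm}(\Delta x_r)$ are a.s.\ finite --- for the positive part because $\ln f\leq f-1$ and Campbell's formula give $\E\big[\sum_{r\leq T}(\ln f)^+(\Delta x_r)\big]\leq c_+$, and for the negative part because your own exponential-formula computation $\E\big[e^{-\sum_{r\leq T}(\ln f)^-(\Delta x_r)}\big]=e^{-c_-}>0$ forces a.s.\ finiteness --- so splitting the limit in \eqref{RT} into the product of the two separately convergent factors is legitimate.
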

\begin{proof}
To prove the existence of the limit in \eqref{RT} we want to apply Theorem \ref{teosato}. To that aim just note that Assumption (M2) implies the finiteness of the integral 
$\int_{\R}\bigg(\sqrt{\frac{d\bnu_m}{d\nu}(y)}-1\bigg)^2\nu(dy).$
Indeed, integrating the last quantity separately over the intervals $\big[-\frac{1}{m},\frac{1}{m}\big]$, $\big]\frac{1}{m},\infty\big[$ and $\big]-\infty,-\frac{1}{m}\big[$, we obtain
\begin{align*}
\int_{\R}\bigg(\sqrt{\frac{d\bnu_m}{d\nu}(y)}-1\bigg)^2\nu(dy)&= \int_{-\frac{1}{m}}^{\frac{1}{m}}\big(1-\sqrt{\rho(y)}\big)^2\tnu(dy)+ \int_{\frac{1}{m}}^{\infty}\big(\sqrt{\brho_m(y)}-\sqrt{\rho(y)}\big)^2\tnu(dy) +\int_{-\infty}^{-\frac{1}{m}}\big(\sqrt{\brho_m(y)}-\sqrt{\rho(y)}\big)^2\tnu(dy)\\
&\leq \int_{-\frac{1}{m}}^{\frac{1}{m}}\big(1-\sqrt{\rho(y)}\big)^2\tnu(dy)+4\nu\bigg(\Big]\frac{1}{m},\infty\Big[\cup \Big]-\infty,-\frac{1}{m}\Big[\bigg),
\end{align*}
that is finite thanks to Assumption (M2) and the fact that $\nu$ is a Lévy measure (in the last inequality we have used the elementary inequality: for $a,b\geq 0$, $(\sqrt a-\sqrt b)^2\leq 2a+2b$).
 In order to simplify the notations let us write 
 $$A^{\pm}(x):=\lim_{\varepsilon\to 0}\bigg(\sum_{r\leq T}\ln f^{\pm}(\Delta x_r)\I_{|\Delta(x_r)|>\varepsilon}-
T\int_{|y|>\varepsilon}(f^{\mp}(y)-1)\nu(dy)\bigg)\quad\textnormal{ with } f^+= \Big(\frac{\bar{\rho}_m}{\rho}\Big)^{\I_{\brho_m\geq\rho}} \textnormal{ and } f^-=\Big( \frac{\brho_m}{\rho}\Big)^{\I_{\rho>\brho_m}},$$ 
so that $R_T^m(x)=\exp(A_+(x)+A_-(x)).$
Remark that,  for all $x,y$ in $\R$ we have: $\vert 1-e^{x+y} \vert \leq \frac{1+e^x}{2}\vert 1-e^y\vert+\frac{1+e^y}{2}\vert 1-e^x\vert.$ Then, using $A_+(x)\geq 0$ and $A_-(x)\leq 0$ we get:
\begin{align*}
 \E_{P^{(\ngamma,0,\nu)}}\big[\vert 1-R_T^m(x) \vert\big]=\E_{P^{(\ngamma,0,\nu)}}\big| 1-\exp(A^+(x)+A^-(x))\big| 
                              &\leq \E_{P^{(\ngamma,0,\nu)}}\bigg[\frac{1+e^{A^+(x)}}{2}\Big| 1-e^{A^-(x)}\Big|+\frac{1+e^{A^-(x)}}{2}\Big| 1-e^{A^+(x)}\Big|\bigg]\\
                              &=\E_{P^{(\ngamma,0,\nu)}}\Big[e^{A^+(x)}-e^{A^-(x)}\Big].
\end{align*}
In order to compute the last quantity we apply Theorem \ref{teosato} and the fact that both $A^+(x)$ and $A^-(x)$ have the same law under $P^{(\ngamma,0,\nu)}$ and 
$P^{(0,0,\nu)}$:
\begin{align*}
   \E_{P^{(\ngamma,0,\nu)}}\Big[e^{A^+(x)}-e^{A^-(x)}\Big] &=\exp\bigg(T\int_{\R}(f^+(y)-f^-(y))\nu(dy)\bigg)-\exp\bigg(T\int_{\R}(f^-(y)-f^+(y))\nu(dy)\bigg)\\
      &=2\sinh\bigg(T\int_{\R}(f^+(y)-f^-(y)\nu(dy)\bigg)\nonumber =2\sinh\bigg(T\int_{\R}\vert \rho(y)-\brho_m(y)\vert \tnu(dy)\bigg)\nonumber.
\end{align*}
\end{proof}
\subsection{Proofs of the examples}\label{ex}
\begin{proof}[Proof of example \ref{ex1}]
Assumption (M1) is obvious by construction, Assumptions (M2) and (M4) follow from the finiteness of the measures $\nu$ and $\tnu$ plus the inequality $(\sqrt{\rho}-1)^2\leq \rho+1$. Prior to considering Assumption (M3), we claim that:
\begin{equation}\label{L}
|\rho(y)-\bar{\rho}_m(y)|\leq \frac{1}{m}\bigg\|\frac{d\rho(y)}{dy}\bigg\|_{\infty} \quad \forall y\in \Big]-m,-\frac{1}{m}\Big]\cup\Big]\frac{1}{m},m\Big].
\end{equation}
In order to prove \eqref{L}, fix an interval $J_{j,k}$, $j=1,\dots,m$ and $k=-m,\dots,m-1$ and note that, by construction of $\brho_m$, there always exist $y_1$, $y_2$ verifying
$\brho_m(y)\leq \rho(y_1) \textnormal{ and }\brho_m(y)\geq \rho(y_2)\quad \forall y\in J_{j,k}.$
Thus, by the continuity of $\rho$, we conclude that, for all $y$ in $J_{j,k}$, there exists $\hat y$ such that $\brho_m(y)=\rho(\hat y)$. Then we apply the mean value theorem to bound $|\rho(y)-\brho_m(y)|$.
 Now, Assumption (M3) is a straightforward consequence of the inequality \eqref{L}.
Indeed, since $\Big\|\frac{d\rho(y)}{dy}\Big\|_{\infty}\leq L$ and $|\rho(0)|\leq K$, we have
\begin{align*}
\int_{\R}|\rho(y)-\brho_m(y)|\tnu(dy)&=\int\limits_{\frac{1}{m}\leq|y|< m}|\rho(y)-\brho_m(y)|\tnu(dy) +\int\limits_{|y|\geq m}|\rho(y)-\brho_m(y)|\tnu(dy)+\int\limits_{|y|< \frac{1}{m}}|\rho(y)-1|\tnu(dy)\\
&\leq \frac{L}{m}\tnu\bigg(\Big]-m,-\frac{1}{m}\Big]\cup\Big]\frac{1}{m},m\Big]\bigg)+ 2\int_{|y|\geq m}(K+L|y|)\tnu(dy)+\int_{|y|< \frac{1}{m}} (K+L|y|+1)\tnu(dy).
\end{align*} 
This quantity tends to zero, uniformly on $\nu$, as $m$ goes to infinity, since $\tnu$ is a finite Lévy measure such that $\int_{|y|\leq 1}|y|\tnu(dy)<\infty$.
\end{proof}
\begin{proof}[Proof of example \ref{ex2}]
Consider $\nu$ in $\M^{M,\varepsilon}$ and define a Lévy measure $\tnu$, absolutely continuous with respect to Lebesgue, whose density is $y^{-2}$. Consequently, 
$\rho(y)=\frac{d\nu}{d\tnu}(y)=e^{-\lambda y^2}$. Condition (M2) writes 
\begin{equation*}
 \int_{\R} (\sqrt {\rho(y)}-1)^2\tnu(dy)=2\int_0^{\infty} \big(e^{\frac{-\lambda y^2}{2}}-1\big)^2 y^{-2}dy
\leq 2\lambda^2\int_0^{1} \frac{y^2}{4}dy+4(e^{-\frac{\lambda}{2}}+1)\int_1^{\infty}\frac{dy}{y^2} < \infty.
\end{equation*}
To verify condition (M3), treat separately the integral $\int_0^{\infty} |\rho(y)-\brho_m(y)|\tnu(dy)$ over the intervals $\Big[0,\frac{1}{m}\Big]$, $\Big]\frac{1}{m},C\Big]$, 
$]C,m]$, $]m,\infty[$, where $C=\sqrt{\frac{1}{3 \varepsilon}}$ is chosen so that $\rho'$ is strictly decreasing in the interval $\Big]\frac{1}{m},\frac{\lceil Cm\rceil}{m}\Big]$ for $m$ big enough:
\begin{align*}
 &\int_0^{\frac{1}{m}} |\rho(y)-1|y^{-2}dy \leq \lambda \int_0^{\frac{1}{m}}\frac{y^2}{y^2}dy \leq \frac{M}{m},\\
&\int_{\frac{1}{m}}^C |\rho(y)-\brho_m(y)|y^{-2}dy\leq \sum_{j=2}^{\frac{\lceil Cm\rceil}{m}}\int_{\frac{j-1}{m}}^{\frac{j}{m}} \frac{|\rho(y)-\brho_m(y)|}{y^2}dy\leq \sum_{j=2}^{\frac{\lceil Cm\rceil}{m}}\frac{|\rho'(j/m)|}{m}\int_{\frac{j-1}{m}}^{\frac{j}{m}} \frac{dy}{y^2}\leq \frac{2MC}{m}\Big(1-\frac{m}{\lceil Cm\rceil}\Big), \quad \text{using } \eqref{L}\\
&\int_{C}^m |\rho(y)-\brho_m(y)|y^{-2}dy\leq \frac{\sqrt{2 M}e^{-1/2}}{m}\Big(\frac{1}{C}-\frac{1}{m}\Big),\quad \text{using } \eqref{L}\\
&\int_{m}^{\infty} |\rho(y)-\brho_m(y)|\tnu(dy)\leq \int_{m}^{\infty} (\rho(y)+\brho_m(y))\tnu(dy)=2 \tnu([m,\infty)).
\end{align*}
Since the quantities above tend to zero, uniformly in $\nu$, as $m$ goes to infinity, we conclude that Assumption (M3) is satisfied. Finally, condition (M4) fails since
$\int_{|y|\leq 1}|y|\tnu(dy)=2\int_0^1 y^{-1}dy=\infty.$
\end{proof}
\begin{proof}[Proof of Example \ref{ex3}]
Let $\nu$ belong to $\M^{'\alpha,C_1,C_2,M,\varepsilon}$ and define a Lévy measure $\tnu$ having Lévy density (with respect to Lebesgue) given by 
$C_1\frac{e^{-\varepsilon |y|}}{|y|^{1+\alpha}}\I_{y<0}+C_2\frac{e^{-\varepsilon y}}{y^{1+\alpha}}\I_{y>0}$. Hence, 
$\rho(y)=\frac{d\nu}{d\tnu}(y)=e^{-|y|(\lambda_1-\varepsilon)}\I_{y<0}+e^{-y(\lambda_2-\varepsilon)}\I_{y>0}$. Condition (M2) writes: 
$$
C_1\int_{-\infty}^{0}\big(e^{\frac{y}{2}(\lambda_1-\varepsilon)}-1\big)^2\frac{e^{\varepsilon y}}{(-y)^{1+\alpha}}dy+C_2\int_0^{\infty}\big(e^{\frac{y}{2}(\varepsilon-\lambda_2)}
-1\big)^2\frac{e^{-\varepsilon y}}{y^{1+\alpha}}dy<\infty,
$$
which is true since, near zero, the integrands are equivalent to $\frac{1}{|y|^{\alpha-1}}$ and are integrable for $\alpha<1$.
To verify condition (M3) treat again separately the integral $\int_0^{\infty} |\rho(y)-\brho_m(y)|\tnu(dy)$ over the intervals $\pm\Big[0,\frac{1}{m}\Big]$, 
$\pm\Big]\frac{1}{m},m\Big]$, $\pm]m,\infty[$. The integrals between $\frac{1}{m}$ and $m$ and between $-m$ and $-\frac{1}{m}$ are bounded by 
$\max\{C_1,C_2\}(M-\varepsilon)\big(\frac{m^{\alpha-1}}{\alpha}-\frac{1}{\alpha m^{\alpha+1}}\big)$, hence tend to zero. Analogously, the integrals between $-\frac{1}{m}$ and $\frac{1}{m}$ tend to zero thanks to the integrability of $y^{-\alpha}$ in this interval. Finally, condition (M4) follows from
the integrability of $|y|^{-\alpha}$ in $[-1,1]$.
\end{proof}
\vspace{0.2cm}
\bibliographystyle{plain}
\bibliography{refs}
\end{document}